\title{An incomplete variant of Wilson's congruence}
\author{Joel Beeren \\
School of Mathematics and Statistics\\ 
University of New South Wales, NSW 2052, Australia\\
joel.b@unsw.edu.au
\and
David Harvey\footnote{Supported by Australian Research Council DECRA Grant DE120101293.}\\ School of Mathematics and Statistics\\ University of New South Wales, NSW 2052, Australia\\
d.harvey@unsw.edu.au
\and
Tim Trudgian\footnote{Supported by Australian Research Council DECRA Grant DE120100173.}\\
Mathematical Sciences Institute\\ The Australian National University,
 ACT 0200, Australia\\ timothy.trudgian@anu.edu.au
}
\newtheorem{Lem}{Lemma}
\newtheorem{con}{Conjecture}
\newcommand{\ZZ}{\mathbb Z}
\newcommand{\QQ}{\mathbb Q}
\begin{document}

\maketitle
\begin{abstract}
\noindent
This article examines the nontrivial solutions of the congruence \[ (p-1)\cdots(p-r) \equiv -1 \pmod p. \] We discuss heuristics for the proportion of primes $p$ that have exactly $N$ solutions to this congruence. We supply numerical evidence in favour of these conjectures, and discuss the algorithms used in our calculations.
\end{abstract}

\section{Heuristics and conjectures}
Wilson's Theorem \cite[Theorems 80 and 81]{HW} states that
 \[ (p-1)! \equiv -1 \pmod p \]
if and only if $p$ is a prime. Now truncate the factorial after $r$ terms. For which primes $p$ is there an $r$ for which
\begin{equation}\label{wit}
 (p-1)\cdots(p-r) \equiv -1 \pmod p ?
\end{equation}
Certainly $r=1$ is trivial; $r=p-1$ follows from Wilson's Theorem, whence $r=p-2$ follows trivially. Henceforth we consider only $2 \leq r \leq p - 3$.

Initially we proceeded as follows. The congruence \eqref{wit} has no solutions if and only if none of the $p - 4$ integers $(p-1)\cdots(p-r) + 1$, for $2\leq r \leq p-3$, are divisible by $p$. The probability that a prime $p$ divides a `random' integer $N$ is $1/p$. Given $m$ random integers chosen independently, the probability that $p$ does not divide any of them is then $(1 - 1/p)^m$. Thus, under heroic randomness and independence assumptions, we expect the proportion of $p$ for which \eqref{wit} has no solutions to be roughly
 \[ (1-1/p)^{p-4}\rightarrow e^{-1} \approx 0.36788 \]
when $p$ is large.

Turning to numerical experiment, we find that 429 of the 1229 primes less than $10^4$ have no solutions to \eqref{wit}. The proportion is 0.349, reasonably close to our initial guess.

It turns out that this guess is almost certainly wrong. The remainder of this paper may serve as yet another cautionary tale about the dangers of heuristic probabilistic reasoning in number theory.

First, our independence assumption is not justified. Denoting by $T_r$ the partial product $T_r = (p - 1) \cdots (p - r)$, we have:
\begin{Lem}\label{Lemma1}
For any $2 \leq r \leq p - 3$,
 \[ T_r T_{p-r-1} \equiv (-1)^{r+1} \pmod p. \]
\end{Lem}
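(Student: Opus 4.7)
The plan is to reduce everything modulo $p$ so that the product $T_r T_{p-r-1}$ becomes a product of two ordinary factorials, and then to apply Wilson's Theorem to evaluate that product.

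First, I would reduce each $T_k$ modulo $p$ by replacing every factor $p-j$ by $-j$. This gives
\[ T_r \equiv (-1)^r r! \pmod p \quad\text{and}\quad T_{p-r-1} \equiv (-1)^{p-r-1}(p-r-1)! \pmod p. \]
Multiplying and using that $p$ is odd (so $(-1)^{p-1}=1$), the claim reduces to showing
\[ r!\,(p-r-1)! \equiv (-1)^{r+1} \pmod p. \]

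Next, I would split $(p-1)!$ at position $r$, writing
\[ (p-1)! = r! \cdot (r+1)(r+2)\cdots(p-1), \]
and reduce the second factor modulo $p$ by the same substitution $p-j \mapsto -j$, which turns it into $(-1)^{p-r-1}(p-r-1)!$. Combining with Wilson's Theorem $(p-1)! \equiv -1 \pmod p$ yields
\[ r!\,(p-r-1)! \equiv (-1)^{p-r-1} \cdot (-1) = (-1)^{p-r} \pmod p, \]
and since $p$ is odd, $(-1)^{p-r} = (-1)^{r+1}$, which is exactly what is needed.

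There is no real obstacle here; the argument is essentially a bookkeeping exercise with signs. The only point requiring any care is correctly tracking the parities of $r$, $p-r-1$, and $p-1$, and noticing that the hypothesis $2 \leq r \leq p-3$ forces $p \geq 5$ (in particular $p$ is odd), so that $(-1)^{p-1}=1$ and the reduction $(-1)^{p-r} = (-1)^{r+1}$ are valid.
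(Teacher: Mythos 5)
Your proof is correct and follows the same route as the paper, which simply notes $T_k \equiv (-1)^k k! \pmod p$ and invokes Wilson's Theorem; you have merely written out the sign bookkeeping (the identity $r!\,(p-r-1)! \equiv (-1)^{r+1} \pmod p$) that the paper leaves implicit. No issues.
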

\begin{proof}
Observe that $T_r \equiv (-1)^r r! \pmod p$, and apply Wilson's Theorem.
\end{proof}
Thus the cases $r$ and $s = p - r - 1$ are not independent. For odd $r$, we see that \eqref{wit} holds for $r$ if and only if it holds for $s$. For even $r$, we see that \eqref{wit} holds for either $r$ or $s$, but not both; and it holds for one of them if and only if $T_r \equiv \pm 1 \pmod p$.

Taking these observations into account, we should posit $p/4 + O(1)$ independent events with probability $1 - 1/p$ corresponding to the odd $r < p/2$, and $p/4 + O(1)$ independent events with probability $1 - 2/p$ corresponding to the even $r < p/2$. Our revised estimate for the proportion of primes for which \eqref{wit} has no solutions is thus
 \[ (1 - 1/p)^{p/4} (1 - 2/p)^{p/4} \rightarrow e^{-3/4} \approx 0.47237. \]
This `improved' heuristic is an even worse match for the observed data!

In fact we are on the right track. We have simply forgotten the following arithmetic gem.
\begin{Lem}\label{Lemma2}
Let $p \equiv 3 \pmod 4$. Then
\begin{equation*}
\left(\frac{p-1}2\right)! \equiv (-1)^{\nu_p} \pmod p,
\end{equation*}
where $\nu_p$ is the number of quadratic non-residues $1 < x < p/2$.
\end{Lem}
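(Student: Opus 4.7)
The plan is to evaluate $m!\pmod p$, where $m = (p-1)/2$, by computing $(m!)^2$ via a pairing of the squares $\{y^2 : 1 \le y \le m\}$ with residues in $\{1,\ldots,m\}$. For each $y \in \{1,\ldots,m\}$, I would define $z_y \in \{1,\ldots,m\}$ and $\epsilon_y \in \{\pm 1\}$ by $y^2 \equiv \epsilon_y z_y \pmod{p}$; concretely, $\epsilon_y = +1$ exactly when $y^2 \bmod p$ already lies in $\{1,\ldots,m\}$, and $\epsilon_y = -1$ otherwise (in which case $z_y = p - (y^2 \bmod p)$).

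The first step is to show that $y \mapsto z_y$ is a bijection of $\{1,\ldots,m\}$ to itself. The only obstruction to injectivity is the case $y_1^2 \equiv -y_2^2 \pmod p$ with $y_1 \neq y_2$, which would force $-1$ to be a quadratic residue modulo $p$; this is ruled out by the hypothesis $p \equiv 3 \pmod 4$, since then $\left(\frac{-1}{p}\right) = -1$.

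Given the bijection, the identity
\[ (m!)^2 \equiv \prod_{y=1}^{m} y^2 \equiv \prod_{y=1}^{m} \epsilon_y z_y = \Bigl(\prod_{y=1}^{m} \epsilon_y\Bigr) \cdot m! \pmod p \]
together with cancellation of $m!$ yields $m! \equiv (-1)^N \pmod p$, where $N = \#\{y : \epsilon_y = -1\}$. To finish, I would observe that $\epsilon_y = -1$ forces $z_y \equiv -y^2 \pmod p$, and since $-1$ is a nonresidue this happens exactly when $z_y$ is a nonresidue. Running $y$ through $\{1,\ldots,m\}$ via the bijection then gives $N = \nu_p$.

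The entire argument rests on the single fact $\left(\frac{-1}{p}\right) = -1$, which enters both in proving injectivity of the pairing and in translating $N$ into a count of nonresidues. The main obstacle, such as it is, is keeping these two uses straight so that the exponent in $(-1)^N$ really matches a count of nonresidues in the interval $(1, p/2)$ rather than some shifted range.
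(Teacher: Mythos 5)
Your argument is correct. The paper itself gives no proof of this lemma---it simply cites Hardy and Wright, Theorem 114---so there is no technique in the paper to compare against; what you have written is essentially the classical argument behind that cited result. Writing $m = (p-1)/2$: you square $m!$, identify $\prod_{y=1}^m y^2$ with the product of $\epsilon_y z_y$ over a pairing of $\{1,\dots,m\}$ with itself, cancel $m!$ (legitimate since $m! \not\equiv 0 \pmod p$), and convert the sign count into a count of non-residues, with the hypothesis $p \equiv 3 \pmod 4$ entering exactly twice through $\left(\frac{-1}{p}\right) = -1$, as you say. Two small points are worth making explicit in a final write-up. First, in the injectivity step the same-sign coincidence $y_1^2 \equiv y_2^2 \pmod p$ with $y_1 \neq y_2$ must also be ruled out; it forces $y_1 + y_2 \equiv 0 \pmod p$, which is impossible for $y_1, y_2 \in \{1,\dots,m\}$ since $0 < y_1 + y_2 \leq p-1$. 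Second, your final count $N$ runs over non-residues in $\{1,\dots,m\}$, whereas the lemma counts non-residues $1 < x < p/2$; these agree because $1$ is always a quadratic residue, so the discrepancy in the range contributes nothing.
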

\begin{proof}
See \cite[Theorem 114]{HW}.
\end{proof}
When $\nu_p$ is even, the congruence \eqref{wit} automatically has the solution $r = (p-1)/2$. To incorporate this into our model, we must address the question as to how often $\nu_p$ is even. Numerical evidence (see Section \ref{sec:computations}) suggests the following conjecture:
\begin{con}\label{con1}
For $p \equiv 3 \pmod 4$, the proportion of $p$ for which $\left(\frac{p-1}2\right)! \equiv 1 \pmod p$ approaches $\frac{1}{2}$ as $p \to \infty$.
\end{con}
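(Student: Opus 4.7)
The plan is to reduce Conjecture 1 to a question about the distribution of class numbers of imaginary quadratic fields modulo $4$, which is predicted by Cohen--Lenstra-style heuristics but appears to be open.

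By Lemma 2, the event $\left(\tfrac{p-1}{2}\right)! \equiv 1 \pmod p$ is precisely the event that $\nu_p$ is even, so the task is to show that $\nu_p \bmod 2$ is asymptotically equidistributed across primes $p \equiv 3 \pmod 4$. The key observation is that $\nu_p$ is controlled by the class number $h(-p)$ via Dirichlet's class number formula,
\[
\left(2 - \left(\tfrac{2}{p}\right)\right) h(-p) \;=\; \sum_{a=1}^{(p-1)/2} \left(\tfrac{a}{p}\right) \;=\; \tfrac{p-1}{2} - 2\nu_p,
\]
valid for primes $p \equiv 3 \pmod 4$ with $p > 3$. Since the discriminant $-p$ has a single prime factor, Gauss's genus theory forces $h(-p)$ to be odd, hence $h(-p) \bmod 4 \in \{1, 3\}$. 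Reducing the display above modulo $4$ and splitting into the two subcases $p \equiv 3, 7 \pmod 8$ (using $\left(\tfrac{2}{p}\right) = \mp 1$ and $(p-1)/2 \equiv 1, 3 \pmod 4$ respectively), a short calculation yields, in both cases,
\[
\nu_p \equiv 0 \pmod 2 \iff h(-p) \equiv 3 \pmod 4.
\]

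This reduces the conjecture to the statement that asymptotically half of the primes $p \equiv 3 \pmod 4$ satisfy $h(-p) \equiv 3 \pmod 4$, and that is where the real obstacle lies. The statement is consistent with, and essentially predicted by, Cohen--Lenstra heuristics for the odd part of class groups of imaginary quadratic fields; but I am not aware of any unconditional proof. Substantial work on the $2$-part of class groups (R\'edei, Gerth, Fouvry--Kl\"uners, and others) gives $4$-rank statistics for general fundamental discriminants, but at prime discriminant $-p$ the $2$-part is already trivial, and the residue class of the resulting odd integer $h(-p)$ modulo $4$ does not seem to be accessible by these tools. A realistic plan is therefore to record the clean reduction above and state that Conjecture 1 follows from this plausible but currently out-of-reach equidistribution hypothesis on $h(-p) \bmod 4$.
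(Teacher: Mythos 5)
This statement is a conjecture in the paper, not a theorem: the authors give no proof, only numerical evidence (the computation of $((p-1)/2)!\bmod p$ for $p<10^{10}$ in Table 2) together with Mordell's recasting $\nu_p \equiv 0 \pmod 2 \iff h(-p)\equiv 3 \pmod 4$, about which they explicitly say they do not know whether it sheds any light on the conjecture. Your reduction --- Dirichlet's class number formula $\bigl(2-\bigl(\tfrac{2}{p}\bigr)\bigr)h(-p)=\tfrac{p-1}{2}-2\nu_p$, oddness of $h(-p)$ from genus theory, and the case split modulo $8$ --- is correct and recovers exactly the equivalence the paper quotes from Mordell (1961), so in substance you are taking the same route as the paper rather than a different one; the difference is only that you derive the equivalence where the paper cites it. Your honest assessment of the remaining step is also accurate: equidistribution of $h(-p)\bmod 4$ over primes $p\equiv 3\pmod 4$ is open, it is not reached by the known $2$-part/$4$-rank techniques since $-p$ is a prime discriminant, and consequently your proposal, like the paper, leaves Conjecture \ref{con1} unproved --- which is the correct state of affairs, not a flaw in your argument.
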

We are not aware of this conjecture having appeared before in print, but it has been raised on the Mathoverflow discussion forum \cite{MO}. The problem has been recast by Mordell \cite{Mordell1961} in terms of the class number $h(-p)$ of $\QQ(\sqrt{-p})$. Namely, for $p > 3$ we have
 \[ \nu_p = \begin{cases} 0 \pmod 2 & \text{if $h(-p) \equiv 3 \pmod 4$}, \\ 1 \pmod 2 & \text{if $h(-p) \equiv 1 \pmod 4$}. \end{cases} \]
We do not know if this interpretation sheds any light on Conjecture \ref{con1}.

We now revise our model a second time, taking into account Lemma \ref{Lemma2} and Conjecture \ref{con1}. Of those primes satisfying $p \equiv 1 \pmod 4$, asymptotically half the primes, our estimate for the proportion of primes for which \eqref{wit} has no solution is still $e^{-3/4}$. For those primes $p \equiv 3 \pmod 4$ with $\nu_p$ odd, the estimate is again $e^{-3/4}$. According to Conjecture \ref{con1} this accounts for another quarter of the primes. However, for the remaining primes, where $\nu_p$ is even, our estimate is zero. This leads to our main conjecture.
\begin{con}\label{Conmain}
The proportion of primes for which \eqref{wit} has no nontrivial solutions is
 \[ \frac{3}{4} e^{-3/4} \approx 0.3542749. \] 
\end{con}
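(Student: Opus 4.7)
The plan is to derive the asymptotic density in Conjecture \ref{Conmain} by partitioning the primes into three classes according to $p \bmod 4$ and, in the case $p \equiv 3 \pmod 4$, the parity of $\nu_p$, and then combining class-wise heuristics built from Lemmas \ref{Lemma1} and \ref{Lemma2}.

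First I would revisit the event count. By Lemma \ref{Lemma1}, the indices $r$ and $s = p-r-1$ are coupled: for odd $r < p/2$ the two cases of \eqref{wit} coincide, contributing a single Bernoulli event of probability $1/p$; for even $r < p/2$ the two cases are mutually exclusive but are jointly equivalent to $T_r \equiv \pm 1 \pmod p$, contributing a single event of probability $2/p$. Modelling these $p/4 + O(1)$ odd-type and $p/4 + O(1)$ even-type events as independent gives, as already computed in the excerpt, a proportion of $(1 - 1/p)^{p/4}(1 - 2/p)^{p/4} \to e^{-3/4}$ of primes with no nontrivial solutions.

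Next I would refine this by conditioning on the midpoint $r = (p-1)/2$ using Lemma \ref{Lemma2}. When $p \equiv 1 \pmod 4$ the lemma is silent and the unconditional estimate $e^{-3/4}$ stands. When $p \equiv 3 \pmod 4$ and $\nu_p$ is odd, the midpoint does \emph{not} produce a solution, and the remaining events may be modelled as above to yield $e^{-3/4}$. When $p \equiv 3 \pmod 4$ and $\nu_p$ is even, the midpoint automatically satisfies \eqref{wit}, so the proportion with no nontrivial solutions is $0$. Invoking Conjecture \ref{con1} to give density $1/4$ to each subcase of $p \equiv 3 \pmod 4$, and using density $1/2$ for $p \equiv 1 \pmod 4$, the weighted sum is
\[
\tfrac{1}{2} e^{-3/4} + \tfrac{1}{4} e^{-3/4} + \tfrac{1}{4} \cdot 0 = \tfrac{3}{4} e^{-3/4},
\]
as claimed.

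The principal obstacle is that the argument is irreducibly heuristic. It assumes (i) that after accounting for Lemmas \ref{Lemma1} and \ref{Lemma2} the remaining $\Theta(p)$ events $T_r \equiv \pm 1 \pmod p$ behave as independent Bernoulli trials with the stated marginals, and (ii) the still-open Conjecture \ref{con1}. Rigorising (i) appears well beyond current technology, since any nontrivial equidistribution statement for prefix products of $1,2,\ldots,p-1$ modulo $p$ would require harmonic-analytic input of a strength comparable to deep results on Kloosterman sums. Moreover, one must implicitly rule out any further algebraic identity, analogous to Lemma \ref{Lemma2}, that would force solutions at other distinguished indices $r$; verifying the \emph{absence} of such identities is a subtle problem in its own right. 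Lacking progress on either front, the reasonable goal is to marshal numerical evidence for the conjecture, which is carried out in Section \ref{sec:computations}.
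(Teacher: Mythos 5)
Your derivation matches the paper's own heuristic argument for Conjecture \ref{Conmain} essentially verbatim: the pairing from Lemma \ref{Lemma1} giving $e^{-3/4}$ for the generic case, then the split by $p \bmod 4$ and the parity of $\nu_p$ via Lemma \ref{Lemma2} and Conjecture \ref{con1}, yielding the weighted sum $\tfrac12 e^{-3/4} + \tfrac14 e^{-3/4} + \tfrac14 \cdot 0 = \tfrac34 e^{-3/4}$. Your closing caveats about the irreducibly heuristic nature of the independence assumptions are consistent with the paper's framing of the statement as a conjecture supported by the computations of Section \ref{sec:computations}.
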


Using the same model, we may develop a more refined conjecture that estimates the proportion of primes $p$ for which there are exactly $N$ values of $r$ satisfying \eqref{wit}.
\begin{con}\label{ConN}
Let $N \geq 0$. The proportion of primes $p$ for which \eqref{wit} has exactly $N$ nontrivial solutions is
 \[ 
\frac{e^{-3/4}}{2^{N+1}}
 \left( \frac{3}{2} \sum_{k=0}^{\lfloor N/2 \rfloor} \frac{1}{k! (N-2k)!} + \sum_{k=0}^{\lfloor (N-1)/2 \rfloor} \frac{1}{k! (N-1-2k)!}\right).
 \]
\end{con}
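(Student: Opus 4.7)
The plan is to derive the formula via the same heuristic probabilistic model that led to Conjecture \ref{Conmain}, but keeping track of the distribution of $N$, not just $P(N=0)$. First I would partition the set $\{2,\ldots,p-3\}$ into pairs $\{r,s\}$ with $s=p-r-1$ together with the single fixed point $r_0=(p-1)/2$. By Lemma \ref{Lemma1}, a pair with $r$ odd satisfies $T_rT_s\equiv 1 \pmod p$, so \eqref{wit} holds simultaneously for both or for neither; while a pair with $r$ even satisfies $T_rT_s\equiv -1 \pmod p$, so \eqref{wit} holds for exactly one or for neither, depending on whether $T_r\equiv\pm 1\pmod p$. Thus each odd pair contributes, under the randomness assumption, $0$ or $2$ solutions with respective probabilities $1-1/p$ and $1/p$, and each even pair contributes $0$ or $1$ solutions with respective probabilities $1-2/p$ and $2/p$. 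The number of odd and even pairs is $p/4+O(1)$ each.

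Next I would analyze the fixed point. A short calculation gives $T_{r_0}\equiv(-1)^{(p-1)/2}((p-1)/2)!\pmod p$. For $p\equiv 1\pmod 4$ the standard identity $((p-1)/2)!^2\equiv-1\pmod p$ forces $T_{r_0}\not\equiv\pm1$, so $r_0$ contributes no solution. For $p\equiv 3\pmod 4$ Lemma \ref{Lemma2} gives $T_{r_0}\equiv(-1)^{\nu_p+1}\pmod p$, so $r_0$ contributes exactly one solution precisely when $\nu_p$ is even. By Conjecture \ref{con1} this happens for asymptotically one quarter of all primes, while for the other three quarters ($p\equiv 1\pmod 4$, or $p\equiv 3\pmod 4$ with $\nu_p$ odd) the fixed point contributes nothing.

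Let $M$ and $K$ denote, respectively, the number of odd pairs and even pairs that contribute a solution. Treating the pairs as independent as in the preceding heuristics, $M$ and $K$ are binomial with parameters $(p/4+O(1),1/p)$ and $(p/4+O(1),2/p)$, so in the limit they are independent Poisson variables with means $1/4$ and $1/2$. Writing $Z\in\{0,1\}$ for the fixed-point contribution, the total number of nontrivial solutions is $N=2M+K+Z$, and $P(Z=0)=3/4$, $P(Z=1)=1/4$. Conditioning on $Z$ and convolving,
\[
P(N=n)=\tfrac{3}{4}\sum_{k=0}^{\lfloor n/2\rfloor}\frac{e^{-1/4}(1/4)^k}{k!}\frac{e^{-1/2}(1/2)^{n-2k}}{(n-2k)!}+\tfrac{1}{4}\sum_{k=0}^{\lfloor (n-1)/2\rfloor}\frac{e^{-1/4}(1/4)^k}{k!}\frac{e^{-1/2}(1/2)^{n-1-2k}}{(n-1-2k)!}.
\]
Using $(1/4)^k(1/2)^{m-2k}=(1/2)^m$ to pull out the powers of $2$ and factoring out $e^{-3/4}$ yields the stated expression after routine simplification.

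The main obstacle is, of course, not the algebra but the modelling assumptions: rigorously justifying the independence of the events $T_r\equiv-1\pmod p$ across pairs, and the independence of these events from the fixed-point value of $\nu_p\bmod 2$, appears well beyond current techniques, and Conjecture \ref{con1} itself is open. The derivation above should therefore be understood as a heuristic consequence of the model, parallel to the derivation of Conjecture \ref{Conmain}, rather than a theorem.
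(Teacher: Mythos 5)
Your derivation is correct and follows essentially the same route as the paper: the pairing $\{r,\,p-r-1\}$ from Lemma \ref{Lemma1} giving independent Poisson-limit counts with means $1/4$ (odd pairs, contributing $2$) and $1/2$ (even pairs, contributing $1$), the fixed point $r=(p-1)/2$ handled via Lemma \ref{Lemma2} and Conjecture \ref{con1} with weight $3/4$ versus $1/4$, and the same convolution and simplification to the stated formula. Your only addition is the explicit check via $((p-1)/2)!^2\equiv-1\pmod p$ that the fixed point never contributes for $p\equiv 1\pmod 4$, which the paper dismisses as asymptotically negligible; and you rightly frame the whole argument as a heuristic rather than a proof.
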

This formula is derived as follows. For $k \geq 0$, denote by $P_k$ the probability that \eqref{wit} has exactly $k$ odd solutions in the range $3 \leq r < (p-1)/2$. By the discussion following Lemma \ref{Lemma1}, and the usual properties of the binomial distribution, for large $p$ our model suggests that
 \[ P_k = \binom{p/4 + O(1)}{k} \left(\frac1p\right)^k \left(1 - \frac1p\right)^{p/4 - k + O(1)} \rightarrow \frac{e^{-1/4}}{4^k k!}. \]
Similarly, for $\ell \geq 0$ denote by $Q_\ell$ the probability that $T_r \equiv \pm 1 \pmod p$ has exactly $\ell$ even solutions in the range $2 \leq r < (p-1)/2$. Then
 \[ Q_\ell = \binom{p/4 + O(1)}{\ell} \left(\frac2p\right)^\ell \left(1 - \frac2p\right)^{p/4 - \ell + O(1)} \rightarrow \frac{e^{-1/2}}{2^\ell \ell!}. \]
Assuming that the behaviour for odd and even $r$ is independent, the probability of observing exactly $N$ solutions for $2 \leq r \leq p - 3$, $r \neq (p-1)/2$, should be
 \[ \sum_{2k + \ell = N} P_k Q_\ell = \sum_{k=0}^{\lfloor N/2\rfloor} \frac{e^{-1/4}}{4^k k!} \cdot \frac{e^{-1/2}}{2^{N-2k} (N-2k)!} = \frac{e^{-3/4}}{2^N} \sum_{k=0}^{\lfloor N/2\rfloor} \frac1{k!(N-2k)!}. \] 
Finally, for $p \equiv 1 \pmod 4$, and for $p \equiv 3 \pmod 4$ with $\nu_p$ odd, the probability that \eqref{wit} has exactly $N$ solutions is given by the above formula (the exceptional value $r = (p-1)/2$ makes a negligible contribution asymptotically). For $p \equiv 3 \pmod 4$ with $\nu_p$ even, we must replace $N$ by $N-1$ to account for the automatic solution $r = (p-1)/2$. Our final estimated probability is thus
 \[ \frac34 \left(\frac{e^{-3/4}}{2^N} \sum_{k=0}^{\lfloor N/2\rfloor} \frac1{k!(N-2k)!} \right) + \frac 14 \left(\frac{e^{-3/4}}{2^{N-1}} \sum_{k=0}^{\lfloor (N-1)/2\rfloor} \frac1{k!(N-1-2k)!}\right). \]

\section{Algorithms and computations}
\label{sec:computations}

We first consider the motivating problem, counting the number of nontrivial solutions to \eqref{wit}. For this the na\"ive algorithm appears to be the best available. For each prime $p$ up to some bound, we compute $T_2, T_3, \ldots$, by successive multiplication modulo $p$, and count how many times we see $-1$.

We wrote a simple C implementation of this algorithm, paying some attention to efficient modular arithmetic. We ran it for all primes up to $10^8$. The running time was 22 hours on a 16-core 2.6 GHz Intel Xeon server. Table \ref{tab1} summarises the results. The last column shows the probabilities for each $N$ proposed in Conjecture \ref{ConN}; they are a superb fit for the observed proportions in the previous column.

It is difficult to push the search bound higher. The running time for each prime is essentially linear in $p$, so the cost of handling all $p < x$ grows essentially quadratically in $x$. For example, to increase the search bound to $10^9$ would take about three months on the same hardware. We do not know of any asymptotically faster algorithms for this problem.
\begin{table}[h]
\centering
\caption{Statistics of nontrivial solutions to \eqref{wit} for $p < 10^8$}
\begin{tabular}{rrrr}
\toprule
 $N$  & \# Primes with $N$ solutions   & Proportion & Conjecture \ref{ConN} \\
\midrule
0           &   2041117      &  0.3542711  &   0.3542749   \\
1           &   1701240      &  0.2952796  &   0.2952291   \\
2           &   1104376      &  0.1916835  &   0.1918989   \\
3           &    553921      &  0.0961426  &   0.0959495   \\
4           &    232308      &  0.0403211  &   0.0402865   \\
5           &     87019      &  0.0151037  &   0.0151612   \\
6           &     29037      &  0.0050399  &   0.0050358   \\
7           &      8887      &  0.0015425  &   0.0015638   \\
8           &      2631      &  0.0004567  &   0.0004423   \\
9           &       692      &  0.0001201  &   0.0001190   \\
10          &       165      &  0.0000286  &   0.0000298   \\
11          &        42      &  0.0000073  &   0.0000071   \\
12          &        17      &  0.0000030  &   0.0000016   \\
13          &         3      &  0.0000005  &   0.0000004   \\
\midrule
Total       &   5761455      &  1.0000000  &   1.0000000   \\
\bottomrule
\end{tabular}
\label{tab1}
\end{table}

Next we consider the problem of computing $((p-1)/2)! \pmod p$ for $p \equiv 3 \pmod 4$, in order to test Conjecture \ref{con1}. For this there is a greater variety of algorithms available. The na\"ive approach leads to an $O(p)$ algorithm as above (with a comparable implied constant). An algorithm with complexity $p^{1/2+o(1)}$ can be deduced from \cite{BGS}. We opted to implement an algorithm with average complexity only $(\log p)^{4 + o(1)}$ per prime, using the ``accumulating remainder tree'' technique introduced in \cite{CGH}.

We give a brief sketch of this algorithm. Suppose that we wish to compute $r_p = ((p-1)/2)! \bmod p$ for all primes $p \equiv 3 \pmod 4$ in some interval $2M < p < 2N$, where $M$ and $N$ are positive integers. Consider the binary tree, with nodes indexed by pairs $(a, b)$, where $b > a > 0$ are integers, defined as follows. The root node is $(M, N)$. The children of a given node $(a, b)$ are $(a, c)$ and $(c, b)$, where $c = \lfloor (a + b)/2\rfloor$. For each node let
 \[ I_{a,b} = \{k \in \ZZ : \text{$k$ odd, } 2a < k < 2b\}. \]
Thus at level $d$, the intervals $I_{a,b}$ partition $I_{M,N}$ into $2^d$ subintervals of roughly equal size. We stop at level $\ell = \lceil \log_2(N - M)\rceil$; at this level each $I_{a,b}$ has cardinality either zero or one.

The algorithm now proceeds as follows. First, for each node let
 \[ P_{a,b} = \prod_{\substack{p \in I_{a,b} \\ p \equiv 3 \bmod 4 \\ \text{$p$ prime}}} p, \qquad  V_{a,b} = \prod_{k \in I_{a,b}} \frac{k+1}2. \]
Compute $V_{a,b}$ and $P_{a,b}$ for each node, working from the bottom of the tree to the top, using the identities $V_{a,b} = V_{a,c} V_{c,b}$ and $P_{a,b} = P_{a,c} P_{c,b}$. Second, for each node let
 \[ X_{a,b} = a! \bmod {P_{a,b}}. \]
Compute $X_{M,N} = M! \bmod P_{M,N}$ using the method of Sch\"onhage (see for example \cite[Prop.~2.3]{CGH}). Then compute $X_{a,b}$ for each node, now working from the top of the tree downwards, using the formulae $X_{a,c} = X_{a,b} \bmod P_{a,c}$ and $X_{c,b} = X_{a,b} V_{a,c} \bmod P_{c,b}$ to descend from each node to its children. Finally, for each $p \equiv 3 \pmod 4$ in the interval $2M < p < 2N$, there is a unique node $(a, b)$ at level $\ell$ such that $p \in I_{a,b}$; for this node we have $I_{a,b} = \{p\}$, $P_{a,b} = p$ and $X_{a,b} = ((p-1)/2)! \pmod p$. For more details, including a complexity analysis, see \cite{CGH}. We mention here only that the complexity bound depends essentially on asymptotically fast algorithms for multiplication and division of large integers.

Using a straightforward implementation of the above algorithm in the Sage computer algebra system \cite{sage}, we computed $r_p$ for all $p < 10^{10}$. To keep memory usage under control, we split the work into intervals $(M, N)$ of size $1.5 \times 10^8$. The total CPU time expended was 4.4 days. The results, shown in Table \ref{tab2}, are in excellent agreement with Conjecture \ref{con1}.
\begin{table}[h]
\centering
\caption{Statistics of $r_p$ for $p < 10^{10}$, $p \equiv 3 \pmod 4$}
\begin{tabular}{lrrr}
\toprule
 $X$  & $\# \{p < X\}$ & $\# \{ p < X: r_p = 1 \}$ & proportion \\
\midrule
$10^1$     &             2  &              1   &  0.5000000000 \\
$10^2$     &            13  &              6   &  0.4615384615 \\
$10^3$     &            87  &             43   &  0.4942528736 \\
$10^4$     &           619  &            310   &  0.5008077544 \\
$10^5$     &        4\,808  &         2\,418   &  0.5029118136 \\
$10^6$     &       39\,322  &        19\,704   &  0.5010935354 \\
$10^7$     &      332\,398  &       166\,270   &  0.5002135994 \\
$10^8$     &   2\,880\,950  &    1\,440\,268   &  0.4999281487 \\
$10^9$     &  25\,424\,042  &   12\,713\,329   &  0.5000514474 \\
$10^{10}$  & 227\,529\,235  &  113\,772\,462   &  0.5000344769 \\
\bottomrule
\end{tabular}
\label{tab2}
\end{table}

\section*{Acknowledgements}

We are indebted to Professor Roger Heath-Brown for suggesting part of the argument leading to Conjectures \ref{Conmain} and \ref{ConN}.

\bibliographystyle{plain}
\bibliography{IncompleteWilson}

\end{document}